\author{Hery Randriamaro\footnote{This research was supported by DAAD.}  \\  \footnotesize{Fachbereich Mathematik und Informatik}\\
\footnotesize{Philipps-Universit\"at Marburg}\\  \footnotesize{D-35032 Marburg} \\ \small{\texttt{herand@mathematik.uni-marburg.de}}}
\title{Spectral Properties of Descent Algebra Elements}
\begin{document}

\maketitle

\begin{abstract}
\noindent The descent algebra of finite Coxeter groups is studied by many famous mathematicians like Bergeron, Brown, Howlett, or Reutenauer.
Blessenohl, Hohlweg, and Schocker, for example, proved a symmetry property of the descent algebra, when it is linked to the representation theory of
its Coxeter group. The interest is particularly showed for the descent algebra of symmetric group. Thibon determined the eigenvalues and
their multiplicities of the action on the group algebra of symmetric group of the descent algebra element, which is the sum over all permutations
weighted by \begin{math}q^{\mathtt{maj}}\end{math}. And even the author diagonalized the matrix of the action of the descent algebra element, which
is the sum over all permutations weighted by the new introduced statistic \begin{math}\mathtt{des}_X\end{math}. In this article, we give a more
general result by determining the eigenvalues and their multiplicities of the action on the group algebra of finite Coxeter group of an element of its
descent algebra. 
\end{abstract}

\section{Introduction}

\noindent We keep the usual notations \begin{math}\mathbb{K}\end{math} for an algebraically closed field of characteristic \begin{math}0\end{math},
and \begin{math}(W,S)\end{math} for a finite Coxeter system, that is to say, \begin{math}W\end{math} is a finite group generated by the elements of
\begin{math}S\end{math} subject to the defining relations \begin{displaymath}(sr)^{m_{sr}}=e, \ \text{for all}\ s,r \in S\end{displaymath} where
\begin{math}e\end{math} is the neutral element of \begin{math}W\end{math}, the \begin{math}m_{sr}\end{math} are positive integers, and
\begin{math}m_{ss}=1\end{math} for all \begin{math}s \in S\end{math}.

\smallskip

\noindent Let \begin{math}J \subseteq S\end{math}. We naturally use the notations \begin{math}W_J\end{math} for the parabolic subgroup of
\begin{math}W\end{math} generated by the elements of \begin{math}J\end{math}, and \begin{math}c_J\end{math} for a Coxeter element of
\begin{math}W_J\end{math} which is a product of the elements of \begin{math}J\end{math} taken in some fixed order. We write
\begin{math}\overline{c_J}\end{math} for the conjugacy class of \begin{math}c_J\end{math}.

\smallskip

\noindent Let \begin{math}J \subseteq S\end{math}. We write \begin{math}\widetilde{J}\end{math} for the set of subsets
\begin{math}K \subseteq S\end{math} such that the parabolic subgroups \begin{math}W_J\end{math} and \begin{math}W_K\end{math} are conjugate. Let
\begin{math}\widetilde{J_1}, \dots, \widetilde{J_p}\end{math} be pairwise different such that
\begin{displaymath}\{\widetilde{J_i}\}_{i \in [p]} = \{\widetilde{J}\}_{J \subseteq S}.\end{displaymath}

\noindent Let \begin{math}J,K \subseteq S\end{math}. We write \begin{math}{}^JW^K\end{math} for the distinguished cross section for the double coset
space \begin{math}W_J \backslash W / W_K\end{math}. If \begin{math}J=\{\varnothing\}\end{math} resp. \begin{math}K=\{\varnothing\}\end{math}, we just
write \begin{math}W^K\end{math} resp. \begin{math}{}^JW\end{math}.

\smallskip

\noindent Let \begin{math}J \subseteq S\end{math}. We write \begin{displaymath}x_J := \sum_{w \in W^J} w.\end{displaymath}

\noindent Let \begin{math}\Xi_W := \{x_J \,|\,J \subseteq S\}\end{math}. We know that \begin{math}\mathbb{K}[\Xi_W]\end{math} is the descent algebra
of \begin{math}W\end{math} which is a subalgebra of the group algebra \begin{math}\mathbb{K}[W]\end{math} multiplicatively equipped with
\cite[Theorem~1]{So 1976}: \begin{displaymath}x_J x_K = \sum_{ L \subseteq K} a_{JKL} x_L\end{displaymath}
for \begin{math}J,K \subseteq S\end{math}, with
\begin{displaymath}a_{JKL}:=\big| \{ x \in {}^JW^K\ |\ x^{-1} W_J \ x \cap W_K = W_L \} \big|.\end{displaymath}

\noindent Let \begin{math}u = \sum_{w \in W} \lambda_w w \in \mathbb{K}[W]\end{math}. We write \begin{math}R_W(u) = (\lambda_{ww'^{-1}})_{w,\,w' \in W}\end{math} for
the matrix of the regular representation of the left-multiplication action of \begin{math}u\end{math} on \begin{math}\mathbb{K}[W]\end{math} relatively
to the standard basis \begin{math}\{w \,|\, w \in W\}\end{math}. The purpose of this article is to prove the following theorem:

\newtheorem{MapCla}{Theorem}[section] 
\begin{MapCla} \label{edesc}
Let \begin{math}d = \sum_{J \subseteq S} \lambda_J x_J \in \mathbb{K}[\Xi_W]\end{math}. Then the spectrum of \begin{math}R_W(d)\end{math} is
\begin{displaymath}Sp\big( R_W(d) \big) = \Big\{\varDelta_j = \sum_{i=1}^p  a_{J_i J_j J_j} \big( \sum_{K_i \in \widetilde{J_i}} \lambda_{K_i} \big)  \Big\}_{j \in [p]} \end{displaymath}
with corresponding multiplicities \begin{displaymath}\big\{ m_{\varDelta_j}= |\overline{c_{J_j}}| \big\}_{j \in [p]}.\end{displaymath}
\end{MapCla}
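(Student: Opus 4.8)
The plan is to reduce the computation of $\mathrm{Sp}\big(R_W(d)\big)$ to the representation theory of the descent algebra $\Sigma:=\mathbb{K}[\Xi_W]$, whose quotient by its Jacobson radical is, by Solomon's theorem, commutative and split over $\mathbb{K}$; consequently all simple $\Sigma$-modules are one-dimensional and are indexed by the conjugacy classes $\widetilde{J_1},\dots,\widetilde{J_p}$ of parabolic subgroups. The first task is to identify the corresponding characters $\theta_1,\dots,\theta_p\colon\Sigma\to\mathbb{K}$. Ordering the subsets $L\subseteq S$ by a linear extension of $\subseteq$, the relation $x_Kx_L=\sum_{M\subseteq L}a_{KLM}x_M$ shows that left multiplication by $x_K$ on $\Sigma$ is upper triangular with diagonal entries $a_{KLL}$; since $x_L\equiv x_{L'}$ modulo the radical whenever $\widetilde{L}=\widetilde{L'}$, reading off the eigenvalue attached to the block of the class $\widetilde{J_j}$ at the representative $L=J_j$ gives $\theta_j(x_K)=a_{KJ_jJ_j}$. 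Because the cardinality defining $a_{KJ_jJ_j}$ is unchanged when $W_K$ is replaced by a conjugate parabolic subgroup, $\theta_j(x_K)$ depends only on $\widetilde{K}$, so that $\theta_j(x_K)=a_{J_iJ_jJ_j}$ for every $K\in\widetilde{J_i}$.

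Next I would regard $\mathbb{K}[W]$ as a left $\Sigma$-module via the restriction of the left regular representation, so that $R_W(d)$ is exactly the matrix of the action of $d$. Because $\mathbb{K}$ is algebraically closed of characteristic $0$ and every simple $\Sigma$-module is one-dimensional, $d$ acts on the $j$-th simple module by the scalar $\theta_j(d)$ while the radical acts nilpotently; hence
\[ \det\!\big(X\cdot\mathrm{Id}-R_W(d)\big)=\prod_{j=1}^{p}\big(X-\theta_j(d)\big)^{m_j}, \]
where $m_j$ is the multiplicity of the $j$-th simple module as a composition factor of $\mathbb{K}[W]$. Expanding $\theta_j(d)=\sum_{K\subseteq S}\lambda_K\,\theta_j(x_K)$ and collecting the subsets $K$ according to their classes $\widetilde{J_i}$ yields $\theta_j(d)=\sum_{i=1}^{p}a_{J_iJ_jJ_j}\big(\sum_{K_i\in\widetilde{J_i}}\lambda_{K_i}\big)=\varDelta_j$, which is precisely the asserted spectrum.

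It remains to prove $m_j=|\overline{c_{J_j}}|$, and this is where the real work lies. Writing $f_j\in\Sigma$ for a primitive idempotent with $\theta_k(f_j)=\delta_{jk}$, the composition multiplicity satisfies $m_j=\dim_{\mathbb{K}}\mathrm{Hom}_\Sigma(\Sigma f_j,\mathbb{K}[W])=\dim_{\mathbb{K}}f_j\mathbb{K}[W]$, and since $R_W(f_j)$ is idempotent its rank equals its trace; as the regular character of $W$ is supported on $e$ with value $|W|$, this gives $m_j=\mathrm{tr}\,R_W(f_j)=|W|\cdot\kappa_j$, where $\kappa_j$ is the coefficient of $e$ in $f_j$. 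The theorem is therefore equivalent to the statement that $|W|\cdot\kappa_j=|\overline{c_{J_j}}|$, i.e.\ that the idempotents of the descent algebra encode, through their identity coefficients, the sizes of the conjugacy classes of the Coxeter elements $c_{J_j}$.

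I expect this last identification to be the main obstacle. The natural strategy is to attach to each $g\in\overline{c_{J_j}}$ an explicit generalized $\theta_j$-eigenvector of $R_W(d)$ and to show that these vectors span the $j$-th generalized eigenspace; the case $J_j=\varnothing$ is already instructive, since $\sum_{w\in W}w$ is an honest eigenvector of every $x_K$ with eigenvalue $[W:W_K]=a_{K\varnothing\varnothing}$, accounting for the single element of $\overline{c_\varnothing}=\{e\}$. To close the count one must establish the global identity $\sum_{j=1}^{p}|\overline{c_{J_j}}|=|W|=\dim\mathbb{K}[W]$ together with the spanning property. For the symmetric group $\mathfrak{S}_n$ both are transparent: conjugacy classes of parabolic subgroups correspond to cycle types $\lambda$, the element $c_{J_\lambda}$ has cycle type $\lambda$, and the Gessel--Reutenauer theory organizes the permutations of fixed cycle type by descent set so that $m_\lambda=|\overline{c_{J_\lambda}}|=n!/z_\lambda$. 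The difficulty in the general Coxeter setting is to supply the substitute for this cycle-type bookkeeping, namely a conjugacy-class-indexed basis of each generalized eigenspace; securing that combinatorial correspondence, and with it the identity $|W|\,\kappa_j=|\overline{c_{J_j}}|$, is the crux of the argument.
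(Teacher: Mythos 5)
Your identification of the eigenvalues is sound and is essentially the paper's argument in different clothing: the paper orders the basis $(x_J)$ by the Bergeron--Bergeron total order, observes that $M_{\Xi_W}(d)$ is then upper triangular with diagonal entries $\sum_J\lambda_J a_{JKK}$, and shows separately (Lemma~\ref{relation}) that $R_W(d)$ and $M_{\Xi_W}(d)$ have the same spectrum because any polynomial identity satisfied by $M_{\Xi_W}(d)$ is inherited by $d$ itself; your reformulation via the characters $\theta_j$ of $\Sigma/\mathrm{rad}\,\Sigma$ is the same triangularity packaged representation-theoretically, and your claim that $\theta_j(x_K)$ depends only on $\widetilde K$ is the paper's Theorem~\ref{new} consequence $a_{JKK}=a_{J'K'K'}$.

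The genuine gap is in the multiplicities. You correctly reduce $m_j$ to the composition multiplicity of the $j$-th simple $\Sigma$-module in $\mathbb{K}[W]$, and further to the identity $|W|\cdot\kappa_j=|\overline{c_{J_j}}|$ for the coefficient $\kappa_j$ of $e$ in a primitive idempotent $f_j$ --- but you then explicitly leave that identity unproved, offering only the symmetric-group case via Gessel--Reutenauer and a programme (``conjugacy-class-indexed bases of the generalized eigenspaces'') for general $W$ that you do not carry out. This is precisely the content of the theorem, so the proof is incomplete. The paper closes this step without constructing eigenvectors at all: since only $e$ contributes to the trace of the regular representation and $e$ appears with coefficient $1$ in every $x_J$, one has $\mathrm{tr}\,R_W(d)=|W|\sum_J\lambda_J$; on the other hand $\mathrm{tr}\,R_W(d)=\sum_j m_{\varDelta_j}\varDelta_j$, and comparing coefficients of the indeterminates $\lambda_{L_j}$ yields the linear system $Am=u$ with $A=(a_{K_iK_jK_j})$, $u=(|W|)_j$. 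The matrix $A$ is triangular with nonzero diagonal, hence invertible, and the identity $A^{-1}u=c$ with $c=(|\overline{c_{J_j}}|)_j$ is quoted from the end of Section~6 of Bergeron--Bergeron--Howlett--Taylor. In your language this says exactly that the vector $(|W|\kappa_j)_j$ is the unique solution of $\sum_j a_{K_iK_jK_j}(|W|\kappa_j)=|W|$ and that this solution is $(|\overline{c_{J_j}}|)_j$; without that external input (or an equivalent trace computation) your argument does not terminate.
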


\noindent Let \begin{math}w \in W\end{math}. Recall that the set of left resp. right descent set of \begin{math}w\end{math} is
\begin{displaymath}\mathtt{DES_L}(w):= \{s \in S\,|\,\mathtt{l}(sw) < \mathtt{l}(w) \}\ \text{resp.}\
\mathtt{DES_R}(w):= \{s \in S\,|\,\mathtt{l}(ws)< \mathtt{l}(w) \},\end{displaymath}
\begin{math}\mathtt{l}\end{math} being the statistic which gives the length of the minimal expression of the elements of \begin{math}W\end{math}
in terms of elements of \begin{math}S\end{math}.

\noindent Let \begin{math}J \subseteq S\end{math}. We write \begin{math}D_J\end{math} for the set of elements of \begin{math}W\end{math} with right
descent set \begin{math}J\end{math}. We have the disjoint union
\begin{displaymath}W^{S \setminus K} = \bigcup_{J \subseteq K} D_J.\end{displaymath}
Thus setting \begin{displaymath}y_J := \sum_{w \in D_J} w,\end{displaymath}
we get \begin{displaymath}x_{S \setminus K} = \sum_{J \subseteq K} y_J,\end{displaymath}
and hence by M\"obius inversion formula \begin{displaymath}y_J = \sum_{K \subseteq J} (-1)^{|J \setminus K|} x_{S \setminus K}.\end{displaymath}

\noindent The descent algebra is a well studied object on the borderline of combinatorics and algebra (\cite{BB 1992}, \cite{BBHT 1992},
\cite{BHS 2005}). Especially the descent algebra relative to the Coxeter system of the symmetric group \begin{math}(A_n, S_{A_n}=\{t_1, \dots, t_n\})\end{math},
where \begin{math}t_i\end{math} is the transposition \begin{math}(i \ i+1)\end{math} (\cite{G 1995}, \cite{K 1997}). Thibon determined the eigenvalues
and their multiplicities of the action of the element \cite[Theorem~56]{K}
\begin{displaymath}\sum_{J \subseteq S_{A_n}} q^{\mathtt{Maj}(J)} \, y_J \in \mathbb{R}(q)[\Xi_{A_n}],\end{displaymath}
where \begin{displaymath}\mathtt{Maj}(J):= \sum_{j \in \{i \in [n]\,|\,t_i \in J\}}j.\end{displaymath}
In \cite{Sc}, Schocker worked on the descent algebra of the symmetric group. Using hyperplanes arrangement and random walk properties (\cite{Br 2000},
\cite{Br 2003}), Brown determined the condition of diagonalizability of the action of element of the descent algebra of finite Coxeter group, and gave a
remarkable approach of the eigenvalues and corresponding multiplicities of its regular representation. In \cite{Ra}, the regular representation of the element
\begin{displaymath}\sum_{J \subseteq S_{A_n}}\mathtt{Des}_X(J) \, y_J \in \mathbb{R}(X_1, \dots, X_n)[\Xi_{A_n}],\end{displaymath}
where \begin{displaymath}\mathtt{Des}_X(J):= \sum_{j \in \{i \in [n]\,|\,t_i \in J\}}X_j,\end{displaymath} was diagonalized

\smallskip

\noindent This paper is organized as follows. We begin with the calculation of the formula for the coefficients \begin{math}a_{J_i J_j J_j}\end{math}.
Then, we determine the eigenvalues and their corresponding multiplicities of the regular representation of an element of the descent algebra of a finite Coxeter
group. In the appendix part, we treat the Coxeter group \begin{math}F_4\end{math} as complete example, and we give a counterexample of the formula
in \cite[Theorem~6.5]{BBHT 1992}.

\section{Special Coefficients of the Descent Algebra}

\noindent In this section, we determine a formula for the values of \begin{math}a_{JKK}\end{math}. Through a slight modification, our formula corrects
a mistake from the formula proposed in \cite[Theorem~6.5]{BBHT 1992}.

\noindent Let \begin{math}J \subseteq S\end{math}. We write \begin{math}N_J \end{math} for the subgroup \cite[Corollary~3]{Ho 1980}
\begin{displaymath}N_J:= \{w \in W \,|\, w^{-1} W_J w = W_J \} \cap W^J. \end{displaymath}

\noindent Let \begin{math}w \in W\end{math} and \begin{math}U,V\end{math} be subgroups of \begin{math}W\end{math}. We write
\begin{displaymath} {}^U w^V := \{x \in UwV\ |\ \mathtt{l}(x) \leq \mathtt{l}(uxv),\, \forall u \in U,\, \forall v \in V\}\end{displaymath} 
for the set of minimal double coset representatives of \begin{math}w\end{math} relative to \begin{math}U\end{math} and \begin{math}V\end{math}.
If \begin{math}U = \{e\} \end{math} resp. \begin{math}V = \{e\} \end{math}, we just write \begin{math}w^V \end{math} resp. 
\begin{math}{}^U w \end{math}. For the case of parabolic subgroups, we just write \begin{math}{}^{W_J} w^{W_K} = {}^J w^K\end{math} with
\begin{math}J,K \subseteq S\end{math}. This lemma can be read off from \cite[1.Introduction]{So 1976}.

\newtheorem{corep}[MapCla]{Lemma}
\begin{corep}
Let \begin{math}w \in W\end{math} and \begin{math}J, K \subseteq S\end{math}. Then the set \begin{math}{}^J w^K\end{math} contains a unique element.
In this case, we consider \begin{math}{}^J w^K\end{math} no more as a subset of \begin{math}W\end{math} but as an element of \begin{math}W\end{math}.
\end{corep}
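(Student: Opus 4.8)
The plan is to show that ${}^Jw^K$, as defined, is exactly the set of \emph{minimal-length} elements of the double coset $W_JwW_K$, and then that this set is a singleton. First I would record the elementary fact that the two descriptions coincide: if $x \in {}^Jw^K$ and $d$ is any element of $W_JwW_K$ of least length, then $d = u_0 x v_0$ for some $u_0 \in W_J$, $v_0 \in W_K$, so $\mathtt{l}(x) \le \mathtt{l}(u_0 x v_0) = \mathtt{l}(d) \le \mathtt{l}(x)$, forcing $\mathtt{l}(x) = \mathtt{l}(d)$; conversely, any least-length element trivially satisfies the defining inequality of ${}^Jw^K$. Since $W_JwW_K$ is finite and nonempty, a least-length element exists, so ${}^Jw^K \neq \varnothing$. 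The whole content of the lemma is therefore the \emph{uniqueness} of the minimal-length representative.

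Before attacking uniqueness I would isolate a descent characterization: any $d \in {}^Jw^K$ satisfies $\mathtt{DES_L}(d) \cap J = \varnothing$ and $\mathtt{DES_R}(d) \cap K = \varnothing$, since if $s \in J$ were a left descent then $sd$ would lie in the same double coset with $\mathtt{l}(sd) < \mathtt{l}(d)$, contradicting minimality, and symmetrically on the right. I would also recall the single-coset theory, which follows directly from the exchange (deletion) condition and is logically prior to the double-coset statement: each left coset $W_J g$ contains a unique element of least length, namely its unique element without left descent in $J$, and for that element $\mathtt{l}(ug) = \mathtt{l}(u) + \mathtt{l}(g)$ for every $u \in W_J$; the right-handed version holds for $gW_K$, for $W^K$, and for $\mathtt{DES_R}(\cdot) \cap K$. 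These additivity identities are the workhorse.

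For uniqueness, take $x,y \in {}^Jw^K$ and write $y = uxv$ with $u \in W_J$, $v \in W_K$. Feeding the additivity $\mathtt{l}(ux) = \mathtt{l}(u)+\mathtt{l}(x)$ (because $x \in {}^JW$) and $\mathtt{l}(xv) = \mathtt{l}(x)+\mathtt{l}(v)$ (because $x \in W^K$) into the two groupings $y = (ux)v$ and $y = u(xv)$, together with $\mathtt{l}(y) = \mathtt{l}(x)$, the triangle inequality yields $\mathtt{l}(u) = \mathtt{l}(v)$. To actually collapse $u$ and $v$ I would induct, but \emph{not} within the fixed double coset: since $x$ already has least length in $W_JwW_K$, no generator can be stripped without leaving the coset. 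Instead I would induct on $|J| + |K|$, writing $J = J' \cup \{s\}$ and comparing the $W_J$-$W_K$ double coset with the finer $W_{J'}$-$W_K$ double cosets it contains; the standard lemma (due to Deodhar) controlling how the extra generator $s$ interacts with a right-reduced element then links these finer cosets, and via the inductive hypothesis forces $x = y$. The base case $J = K = \varnothing$ is the trivial one-element statement, and the cases with one side empty are precisely the single-coset result recalled above.

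The main obstacle is exactly this uniqueness step, and the difficulty is structural: two-sided multiplication by $W_J$ and $W_K$ can both raise and lower length, so the naive ``remove a descending generator'' induction fails, since the minimal element cannot be shortened inside its own coset and since the subgroup $W_J \cap d W_K d^{-1}$ may be nontrivial (so distinct pairs $(u,v)$ can satisfy $udv = d$). The additivity identities for reduced coset representatives, together with the rank induction governed by Deodhar's lemma, are what I expect to resolve this; everything else is bookkeeping.
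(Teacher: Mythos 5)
The paper does not actually prove this lemma: it is stated with a pointer to the introduction of Solomon's 1976 article, where the existence and uniqueness of minimal double coset representatives is taken from the classical theory. So the comparison here is between your sketch and the standard literature proof rather than an in-paper argument. Your preliminary reductions are all correct: the identification of ${}^Jw^K$ with the set of minimal-length elements of $W_JwW_K$, the existence by finiteness, the descent characterization, and the length additivity $\mathtt{l}(ug)=\mathtt{l}(u)+\mathtt{l}(g)$ for $g\in{}^JW$, $u\in W_J$ (and its right-handed analogue). Your deduction that $\mathtt{l}(u)=\mathtt{l}(v)$ for $y=uxv$ is also correct, though as you note it does not by itself collapse anything.

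The genuine gap is in the uniqueness step, which you yourself identify as the whole content of the lemma and then do not carry out. The proposed induction on $|J|+|K|$ reduces uniqueness to the following assertion: among the $W_{J'}$-$W_K$ double cosets partitioning $W_JwW_K$ (where $J=J'\cup\{s\}$), only one can contain an element of globally minimal length. Deodhar's lemma in its usual form concerns one-sided cosets and a single generator, and it is not spelled out how it links two \emph{distinct} finer double cosets, each of whose minimal elements is already unshortenable within its own coset; this is exactly where all the difficulty is concentrated, and ``links these finer cosets \dots and forces $x=y$'' is an assertion, not an argument. A cleaner and fully elementary route, which is essentially the one behind the reference the paper cites, avoids the rank induction entirely: given $w=udv$ with $d$ of minimal length in the double coset, choose among all such factorizations one minimizing $\mathtt{l}(u)+\mathtt{l}(v)$, and apply the deletion (exchange) condition to a concatenated reduced word for $u$, $d$, $v$; any failure of $\mathtt{l}(w)=\mathtt{l}(u)+\mathtt{l}(d)+\mathtt{l}(v)$ deletes a letter either from $u$ or $v$ (contradicting minimality of $\mathtt{l}(u)+\mathtt{l}(v)$) or from $d$ (contradicting minimality of $d$ in the double coset). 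The resulting length-additive factorization gives uniqueness immediately, since $\mathtt{l}(w)=\mathtt{l}(d)$ then forces $u=v=e$. I would replace your inductive step with this argument, or at least state and prove the double-coset lemma you intend to use.
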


\noindent Let \begin{math}K \subseteq S\end{math} and \begin{math}K' \in \widetilde{K}\end{math}. We write
\begin{displaymath}C_{K' K}:= \{w \in W \ |\ w^{-1} W_{K'}\ w  = W_K \}.\end{displaymath}

\newtheorem{corepc}[MapCla]{Lemma}
\begin{corepc}
Let \begin{math}c \in  C_{K' K}\end{math}, and \begin{math}c_{K' K} \in  {}^{{}^{N_{K'} W_{K'}}} c^{{}^{N_{K} W_{K}}}\end{math}.
Then \begin{math}c_{K' K} \in  C_{K' K}\end{math}.
\end{corepc}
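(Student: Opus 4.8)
The plan is to reduce the statement to the elementary observation that $c_{K'K}$ differs from $c$ only by left- and right-multiplication by elements that normalize $W_{K'}$ and $W_K$ respectively, so that the conjugation relation defining $C_{K'K}$ is preserved. First I would record, starting from the definition of $N_J$ in \cite[Corollary~3]{Ho 1980}, that \emph{every} element of the subgroup $N_{K'}W_{K'}$ normalizes $W_{K'}$, not merely the factor $N_{K'}$. Indeed, writing a typical element as $u = nw$ with $n \in N_{K'}$ and $w \in W_{K'}$, one has $u^{-1} W_{K'} u = w^{-1}(n^{-1}W_{K'}n)w = w^{-1}W_{K'}w = W_{K'}$, since $n^{-1}W_{K'}n = W_{K'}$ by the definition of $N_{K'}$ and $w \in W_{K'}$. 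The same computation shows that every element of $N_K W_K$ normalizes $W_K$.

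Next, since the set ${}^{N_{K'}W_{K'}} c^{N_K W_K}$ is by definition contained in the double coset $(N_{K'}W_{K'})\,c\,(N_K W_K)$, I would write $c_{K'K} = u\,c\,v$ with $u \in N_{K'}W_{K'}$ and $v \in N_K W_K$. A direct conjugation computation then gives
\[ c_{K'K}^{-1}\,W_{K'}\,c_{K'K} = v^{-1}c^{-1}\big(u^{-1}W_{K'}u\big)c\,v = v^{-1}\big(c^{-1}W_{K'}c\big)v = v^{-1}W_K v = W_K, \]
where the second equality uses that $u$ normalizes $W_{K'}$, the third uses the hypothesis $c \in C_{K'K}$, and the last uses that $v$ normalizes $W_K$. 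This is exactly the defining condition $c_{K'K}^{-1}W_{K'}c_{K'K} = W_K$, hence $c_{K'K} \in C_{K'K}$, which completes the argument.

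The computation itself is routine; the only point demanding attention is the first step, namely verifying that the full product subgroup $N_{K'}W_{K'}$ normalizes $W_{K'}$ rather than only its subgroup $N_{K'}$. This is precisely the content of Howlett's structural result identifying $N_{K'}W_{K'}$ with the normalizer $N_W(W_{K'})$, but as shown above the inclusion actually needed here follows at once from the definition of $N_{K'}$. I therefore do not expect any genuine obstacle beyond making this normalization observation explicit.
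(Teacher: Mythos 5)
Your proposal is correct and follows essentially the same route as the paper: both write $c_{K'K}$ and $c$ as differing by factors from $N_{K'}W_{K'}$ on the left and $N_KW_K$ on the right, and then cancel these factors in the conjugation relation $c^{-1}W_{K'}c = W_K$ using that these product subgroups normalize $W_{K'}$ and $W_K$ respectively. Your explicit verification that all of $N_{K'}W_{K'}$ (not just $N_{K'}$) normalizes $W_{K'}$ is a point the paper leaves implicit, but the argument is the same.
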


\begin{proof} If \begin{math}c = n'\, k'\, c_{K' K}\, n\, k\end{math} with \begin{math}n' k'  \in  N_{K'} W_{K'} \end{math} and \begin{math}nk  \in  N_K W_K \end{math}, then 
\begin{displaymath} \left.  \begin{array}{ccc}
(n'\, k'\, c_{K' K}\, n\, k)^{-1}W_{K'}\ n'\, k'\, c_{K' K}\,  n\, k  & = & W_K,   \\
( c_{K' K}\,  n\, k)^{-1}W_{K'}\ c_{K' K}\,  n\, k &  = & W_K  ,\\ 
c_{K' K}^{-1} W_{K'}\  c_{K' K} & = &  n\, k\, W_K\, (n\, k)^{-1}, \\
c_{K' K}^{-1}  W_{K'}\  c_{K' K} & = &  W_K.  \end{array}  \right.  \end{displaymath}
\end{proof}

\noindent Let \begin{math}E \subseteq  W\end{math} and \begin{math}U,V\end{math} be subgroups of \begin{math}W\end{math}. We write 
\begin{displaymath}{}^UE^V := \bigcup_{w \in E} {}^U w^V.\end{displaymath}
For the case of parabolic subgroups, we just write \begin{math}{}^{W_J} E^{W_K} = {}^J E^K\end{math}, with \begin{math}J,K \subseteq S\end{math}.

\newtheorem{farany}[MapCla]{Lemma}
\begin{farany}
Let \begin{math}K \subseteq S\end{math}, \begin{math}K' \in \widetilde{K}\end{math}, and
\begin{math}c_{K' K} \in  {}^{{}^{N_{K'} W_{K'}}} C_{K' K}{}^{{}^{N_{K} W_{K}}}\end{math}. Then 
\begin{displaymath} {}^{K'}(c_{K' K} N_K)^K =  c_{K'K} N_K\ \text{and}\ {}^{K'}(N_{K'}\, c_{K' K})^K = N_{K'}\, c_{K' K}.\end{displaymath}
\end{farany}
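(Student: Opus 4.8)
The plan is to show that every element of $c_{K'K}N_K$ is a minimal double coset representative, so that by the uniqueness lemma above it equals its own image under ${}^{K'}(\,\cdot\,)^K$; the two displayed identities then follow, the second from the first by an inversion symmetry. Throughout write $c:=c_{K'K}$, let $\Phi$ be the root system of $(W,S)$ with positive system $\Phi^+$ (so $\mathtt{l}(w)$ counts the roots of $\Phi^+$ sent by $w$ into $\Phi^-$), and let $\Phi_K\subseteq\Phi$ be the sub-root-system spanned by $K$. Two preliminaries: by the preceding lemma $c\in C_{K'K}$, i.e. $c^{-1}W_{K'}c=W_K$; and since $c$ is a minimal-length representative of its $N_{K'}W_{K'}$–$N_KW_K$ double coset and $W_K\subseteq N_KW_K$, right multiplication by any $s\in K$ does not shorten $c$, so $c\in W^K$, i.e. $c(\Phi_K^+)\subseteq\Phi^+$. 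Moreover every $n\in N_K=\{w\mid w^{-1}W_Kw=W_K\}\cap W^K$ normalizes $W_K$ and hence permutes $\Phi_K$; being in $W^K$ it also sends each simple root $\alpha_s$, $s\in K$, into $\Phi^+$, whence $n\alpha_s\in\Phi_K^+$.

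The heart of the argument is to prove $cn\in W^K$ for every $n\in N_K$, i.e. $\mathtt{l}(cns)>\mathtt{l}(cn)$ for all $s\in K$. Fix such an $s$. Since $n$ normalizes $W_K$, the element $nsn^{-1}\in W_K$ is the reflection in the root $n\alpha_s$, and conjugating by $c$ (which maps $W_K$ onto $W_{K'}$) produces the reflection $t':=c(nsn^{-1})c^{-1}\in W_{K'}$ in the root $cn\alpha_s$. A direct check gives the identity $cns=t'\,cn$. Writing $\alpha_{t'}\in\Phi^+$ for the positive root of $t'$, left multiplication by the reflection $t'$ lengthens $cn$ precisely when $(cn)^{-1}\alpha_{t'}\in\Phi^+$. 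Now $\alpha_{t'}=\pm\,cn\alpha_s$, so $(cn)^{-1}\alpha_{t'}=\pm\alpha_s$; the sign is positive because $n\alpha_s\in\Phi_K^+$ and $c(\Phi_K^+)\subseteq\Phi^+$ force $cn\alpha_s\in\Phi^+$, i.e. $\alpha_{t'}=cn\alpha_s$. Hence $(cn)^{-1}\alpha_{t'}=\alpha_s\in\Phi^+$, so $\mathtt{l}(t'cn)>\mathtt{l}(cn)$ and therefore $\mathtt{l}(cns)>\mathtt{l}(cn)$. As $s\in K$ was arbitrary, $cn\in W^K$.

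To finish the first identity, note that $(cn)^{-1}W_{K'}(cn)=n^{-1}W_Kn=W_K$, so $cn\in C_{K'K}$; and for $s\in K'$ the element $s(cn)=(cn)\big((cn)^{-1}s(cn)\big)$ is obtained from $cn$ by right multiplication by the reflection $(cn)^{-1}s(cn)\in W_K$, which lengthens $cn$ because $cn\in W^K$. Thus $cn\in{}^{K'}W$ as well, so $cn$ is $(K',K)$-reduced and ${}^{K'}(cn)^K=cn$. Taking the union over $n\in N_K$ yields ${}^{K'}(c_{K'K}N_K)^K=c_{K'K}N_K$. For the second identity observe that inversion sends $W_{K'}wW_K$ to $W_Kw^{-1}W_{K'}$ and minimal representatives to minimal representatives, so $\big({}^{K'}(N_{K'}c)^K\big)^{-1}={}^{K}(c^{-1}N_{K'})^{K'}$; since $c^{-1}$ is the minimal element of $C_{KK'}=C_{K'K}^{-1}$ and plays for the pair $(K,K')$ the role $c$ plays for $(K',K)$, the already proved identity (with $K$ and $K'$ interchanged) gives ${}^{K}(c^{-1}N_{K'})^{K'}=c^{-1}N_{K'}$, and inverting back yields ${}^{K'}(N_{K'}c_{K'K})^K=N_{K'}c_{K'K}$.

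I expect the middle paragraph, proving $c_{K'K}N_K\subseteq W^K$, to be the real obstacle. The subtlety is that $N_W(W_K)$ is not parabolic, so the mere minimality of $c_{K'K}$ does not upgrade to length-additivity along $N_W(W_K)$; the inclusion genuinely uses both that $N_K$ stabilizes the sub-root-system $\Phi_K$ and that $c_{K'K}$, being short, keeps $\Phi_K^+$ positive, and these two ingredients combine cleanly only after passing to the root system.
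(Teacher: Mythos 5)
Your proof is correct, and its overall skeleton matches the paper's: both arguments first show that every element of \begin{math}c_{K'K}N_K\end{math} lies in \begin{math}W^K\end{math}, and then upgrade this to membership in \begin{math}{}^{K'}W\end{math} by conjugating \begin{math}W_{K'}\end{math} across \begin{math}c_{K'K}n\end{math} into \begin{math}W_K\end{math} (your computation \begin{math}s\,c_{K'K}n = c_{K'K}n\cdot\big((c_{K'K}n)^{-1}s\,c_{K'K}n\big)\end{math} is the same trick as the paper's \begin{math}(k')^{-1}c_{K'K}\,n = c_{K'K}\,n k_2\end{math}). Where you genuinely diverge is in the first step, which you correctly identify as the crux. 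The paper disposes of \begin{math}(c_{K'K}N_K)^K = c_{K'K}N_K\end{math} in one sentence --- \begin{math}c_{K'K}\end{math} is a minimal representative modulo \begin{math}N_KW_K\end{math} and the elements of \begin{math}N_K\end{math} are minimal modulo \begin{math}W_K\end{math}, hence the products are minimal modulo \begin{math}W_K\end{math} --- which, as you note, presupposes a length-additivity along the non-parabolic subgroup \begin{math}N_KW_K\end{math} that minimality alone does not supply. Your root-system argument (\begin{math}n\end{math} stabilizes \begin{math}\Phi_K\end{math} and keeps the simple roots indexed by \begin{math}K\end{math} positive, while \begin{math}c_{K'K}\in W^K\end{math} keeps \begin{math}\Phi_K^+\end{math} positive, hence \begin{math}c_{K'K}n\,\alpha_s\in\Phi^+\end{math} for all \begin{math}s\in K\end{math}) supplies exactly the justification the paper elides, and is the more robust version of this step; the detour through the reflection \begin{math}t'\end{math} is unnecessary, since the criterion \begin{math}\mathtt{l}(ws)>\mathtt{l}(w)\Leftrightarrow w\alpha_s\in\Phi^+\end{math} applies directly to \begin{math}w=c_{K'K}n\end{math}, but it is not wrong. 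Finally, for the second displayed identity the paper simply declares the proof analogous, whereas you derive it from the first identity by inversion symmetry of minimal double coset representatives; both are valid, and your reduction has the minor advantage of not repeating the argument.
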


\begin{proof} It is clear that \begin{math}(c_{K' K} N_K)^K =  c_{K' K} N_K \end{math}. Effectively, since \begin{math}c_{K' K} \end{math} is a left coset representative of the subgroup 
\begin{math} N_K W_K \end{math}, and the elements of \begin{math} N_K\end{math} are left coset representatives of \begin{math} W_K\end{math}, then
the elements of \begin{math}c_{K' K}  N_K\end{math} are left coset representatives of \begin{math} W_K\end{math}.\\
\noindent We just then have to prove that \begin{math} {}^{K'}(c_{K' K} N_K) =  c_{K' K} N_K \end{math}.
Let \begin{math} c_{K' K}\,n \in c_{K' K} N_K\end{math} and let us suppose that \begin{math}c_{K' K}\,n = k' b\end{math}, where \begin{math}k' \in  W_{K'} \end{math}
and \begin{math}b \in {}^{K'} W\end{math}. Then \begin{math}\mathtt{l}(c_{K' K}\,n) =  \mathtt{l}(k' b) =  
\mathtt{l}(k' ) + \mathtt{l}(b) \end{math} i.e. \begin{displaymath}\mathtt{l}(c_{K' K}\,n) \geq  \mathtt{l}(b).\end{displaymath}
On the other hand, we have \begin{math}(k')^{-1}c_{K' K}\,n = c_{K' K}\, k_1 n = c_{K' K}\, n k_2 = b \end{math} with \begin{math}k_1, k_2 \in  W_K \end{math}. 
Then \begin{math}\mathtt{l}(c_{K' K}\,n k_2) = \mathtt{l}(c_{K' K}\,n) + \mathtt{l}(k_2)  =  \mathtt{l}(b) \end{math}, i.e. 
\begin{displaymath}\mathtt{l}(c_{K' K}\,n)  \leq  \mathtt{l}(b).\end{displaymath}
The only possibility is then \begin{math}k_2 = k_1 = k'  =  e\end{math}, so we get the result.\\
\noindent The proof for \begin{math}{}^{K'}(N_{K'}\, c_{K' K})^K = N_{K'}\, c_{K' K}\end{math} is analogous.
\end{proof}

\newtheorem{najkklem}[MapCla]{Lemma}
\begin{najkklem}
Let \begin{math}K \subseteq S\end{math}, \begin{math}K' \in \widetilde{K}\end{math}, and
\begin{math}c_{K' K} \in  {}^{{}^{N_{K'} W_{K'}}} C_{K' K}{}^{{}^{N_{K} W_{K}}}\end{math}. Then 
\begin{displaymath}\{w \in {}^{K'} W^K\ |\ w^{-1} W_{K'}\ w  = W_K\} = c_{K' K} N_K = N_{K'}\, c_{K' K}.\end{displaymath}
\end{najkklem}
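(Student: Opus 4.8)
The plan is to prove the chain of equalities
$\{w \in {}^{K'} W^K \mid w^{-1} W_{K'} w = W_K\} = c_{K'K} N_K = N_{K'}\, c_{K'K}$
by establishing the first equality through its two inclusions, and then obtaining the second one by the left--right symmetric version of the same argument. The proof will rest on three ingredients already at hand: the lemma guaranteeing $c_{K'K} \in C_{K'K}$, that is $c_{K'K}^{-1} W_{K'} c_{K'K} = W_K$; the lemma asserting ${}^{K'}(c_{K'K} N_K)^K = c_{K'K} N_K$ and ${}^{K'}(N_{K'} c_{K'K})^K = N_{K'} c_{K'K}$, which says exactly that every element of $c_{K'K} N_K$ and of $N_{K'} c_{K'K}$ is its own minimal double coset representative and hence lies in ${}^{K'} W^K$; and Howlett's semidirect decomposition of the normalizer $N_W(W_K) := \{w \mid w^{-1} W_K w = W_K\} = N_K \ltimes W_K$ (and likewise for $K'$), which is the structural fact behind the very definition of $N_J$.

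I would start with the inclusion $c_{K'K} N_K \subseteq \{w \in {}^{K'} W^K \mid w^{-1} W_{K'} w = W_K\}$. Fix $n \in N_K$. By the minimality lemma, $c_{K'K} n \in {}^{K'} W^K$, so only the conjugation condition needs checking. Since $n$ normalizes $W_K$ and $c_{K'K} \in C_{K'K}$, one computes $(c_{K'K} n)^{-1} W_{K'} (c_{K'K} n) = n^{-1}(c_{K'K}^{-1} W_{K'} c_{K'K}) n = n^{-1} W_K n = W_K$, which is what is wanted. The identical computation with $n' \in N_{K'}$ in place of $n$ gives the inclusion $N_{K'}\, c_{K'K} \subseteq \{w \in {}^{K'} W^K \mid w^{-1} W_{K'} w = W_K\}$.

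For the reverse inclusion I would first note that $C_{K'K}$ is a single right coset of the normalizer, namely $C_{K'K} = c_{K'K}\, N_W(W_K)$: indeed $w^{-1} W_{K'} w = W_K = c_{K'K}^{-1} W_{K'} c_{K'K}$ forces $c_{K'K}^{-1} w$ to normalize $W_K$. Applying Howlett's decomposition $N_W(W_K) = N_K W_K$, any $w$ in the left-hand set factors as $w = c_{K'K}\, n\, k$ with $n \in N_K$ and $k \in W_K$. Because $k \in W_K$, the elements $w$ and $c_{K'K} n$ lie in the same $(W_{K'}, W_K)$-double coset, for $W_{K'} w W_K = W_{K'} c_{K'K} n k W_K = W_{K'} c_{K'K} n W_K$; moreover both are minimal representatives of that double coset, the first by hypothesis $w \in {}^{K'} W^K$ and the second by the minimality lemma. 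Since a double coset has a unique minimal representative (the uniqueness lemma), I conclude $w = c_{K'K} n \in c_{K'K} N_K$. Running the same argument on the left, with the coset description $C_{K'K} = N_W(W_{K'})\, c_{K'K}$ and the symmetric half of the minimality lemma, yields $w \in N_{K'}\, c_{K'K}$.

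The hard part is conceptual rather than computational: the set $C_{K'K}$ of conjugators is not itself closed under the passage to minimal double coset representatives, so one cannot argue with it directly inside ${}^{K'} W^K$. Everything therefore hinges on combining Howlett's unique factorization $w = c_{K'K} n k$---which isolates the $W_K$-free factor $n \in N_K$---with the minimality and uniqueness lemmas, which together let me discard the spurious factor $k \in W_K$. The only point requiring care is to keep the two normalizer cosets on the correct side so that the right-hand bookkeeping produces $c_{K'K} N_K$ while the left-hand bookkeeping produces $N_{K'}\, c_{K'K}$, matching the two descriptions asserted in the statement.
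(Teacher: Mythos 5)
Your proof is correct, but for the crucial reverse inclusion it takes a genuinely different route from the paper. The paper argues by a two-way injection: it identifies $\{w\in{}^{K'}W^{K'}\mid w^{-1}W_{K'}w=W_{K'}\}$ with $N_{K'}$, exhibits the injection $\phi\colon n\mapsto n\,c_{K'K}$ into the set in question, exhibits a second injection $\phi'\colon x\mapsto {}^{K'}(c_{K'K}x^{-1})^{K'}$ back into $N_{K'}$, and concludes by finiteness that $\phi$ is a bijection onto $N_{K'}c_{K'K}$. You instead observe that $C_{K'K}$ is a single coset $c_{K'K}N_W(W_K)$ of the normalizer, invoke Howlett's factorization $N_W(W_K)=N_KW_K$ to write any conjugator in ${}^{K'}W^K$ as $c_{K'K}nk$ with $n\in N_K$, $k\in W_K$, and then discard $k$ because $w$ and $c_{K'K}n$ are minimal representatives of the same $(W_{K'},W_K)$-double coset, hence equal by the uniqueness lemma. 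Both arguments share the easy inclusion (the minimality lemma plus the conjugation computation $n^{-1}W_Kn=W_K$). Your version is the more transparent one: it isolates the structural fact actually doing the work, namely the semidirect decomposition of the normalizer, which the paper uses only implicitly (e.g.\ when it forms double cosets with respect to $N_KW_K$), and it avoids the separate verification that $\phi'$ is injective. The paper's version buys a slightly lighter footprint --- it never has to state Howlett's decomposition explicitly --- at the cost of an opaque counting step. The one point to keep honest in your write-up is the citation for $N_W(W_K)=N_KW_K$: the paper only quotes \cite[Corollary~3]{Ho 1980} for $N_K$ being a subgroup, so you should cite Howlett's theorem for the full factorization rather than treat it as folklore.
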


\begin{proof} It is clear that \begin{math}\{ w \in {}^K W^K\ |\ w^{-1} W_K\ w  = W_K \} = N_K \end{math} for all \begin{math}K \subseteq S\end{math}. 

\begin{itemize}
\item The map \begin{math}\phi:\{ w \in {}^{K'} W^{K'}\ |\ w^{-1} W_{K'}\ w = W_{K'} \} \rightarrow  \{ w \in {}^{K'} W^K\ |\ w^{-1} W_{K'}\ w  = W_K \} \end{math}
\begin{displaymath}n \mapsto n\,c_{K'K}  \end{displaymath}  
is clearly injective.

\item The map \begin{math}\phi':\{ w \in {}^{K'} W^K\ |\ w^{-1}  W_{K'}\ w  = W_K \} \rightarrow  \{ w \in {}^{K'} W^{K'}\ |\ w^{-1} W_{K'}\ w  = W_{K'} \} \end{math},
\begin{displaymath}x \mapsto {}^{K'}(c_{K' K}\,  x^{-1})^{K'}\end{displaymath}   
is injective. Effectively, \begin{math}\phi'(x)=\phi'(y)\end{math} means  \begin{math}c_{K' K}\,x^{-1}= u_1 c_{K' K}\,y^{-1} u_2\end{math} with \begin{math}u_1, u_2 \in  W_{K'} \end{math}.
Then \begin{math}c_{K' K}\, x^{-1}=c_{K' K} v y^{-1} u_2\end{math} and \begin{math}x^{-1}= v y^{-1} u_2\end{math} with \begin{math}v \in W_K\end{math}.
The only possibility is \begin{math}v = u_2 = e\end{math}. 
\end{itemize}
\noindent Then we deduce that \begin{math}\phi\end{math} is bijective and \begin{math}\{w \in {}^{K'} W^K\ |\ w^{-1} W_{K'}\ w = W_K \} = N_{K'}\, c_{K' K}\end{math}.\\
\noindent The proof is analogous for \begin{math}\{ w \in {}^{K'} W^K\ |\ w^{-1} W_{K'}\ w  = W_K \}  = c_{K' K} N_K \end{math}.
\end{proof}

\noindent Let \begin{math}J, K \subseteq S\end{math}, and, for all \begin{math}K' \in \widetilde{K}\end{math}, let us fixe an element
\begin{math}c_{K' K} \in  {}^{{}^{N_{K'} W_{K'}}} C_{K' K}{}^{{}^{N_{K} W_{K}}}\end{math}. We write
\begin{displaymath}H_{JK} := \{K' \in \widetilde{H} \,|\, c_{K' K} = {}^J c_{K' K} \}.\end{displaymath}
We can now give the formula to determine \begin{math}a_{JKK}\end{math}.

\newtheorem{new}[MapCla]{Theorem}
\begin{new} \label{new}
Let \begin{math}J, K \subseteq S\end{math}. We have 
\begin{displaymath}a_{JKK} = \sum_{K' \in H_{JK} \cap 2^J} \frac{|N_K|}{|W_J \cap N_{ K'}|}.  \end{displaymath}
\end{new}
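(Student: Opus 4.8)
The plan is to extract $a_{JKK}$ straight from Solomon's definition and to sort the minimal double coset representatives it counts according to the standard parabolic subgroup onto which $W_K$ is conjugated. Taking $L=K$ gives $a_{JKK}=|\{x\in{}^JW^K \mid x^{-1}W_Jx\cap W_K=W_K\}|=|\{x\in{}^JW^K \mid xW_Kx^{-1}\subseteq W_J\}|$. For a minimal $(J,K)$ double coset representative $x$ the subgroup $xW_Kx^{-1}\cap W_J$ is a standard parabolic of $W_J$ (this is exactly what makes Solomon's coefficients well defined), so $xW_Kx^{-1}\subseteq W_J$ forces $xW_Kx^{-1}=W_{K'}$ for some $K'\subseteq J$ with $W_{K'}$ conjugate to $W_K$, that is $K'\in\widetilde K$ and $x\in C_{K'K}$. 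Since the sets $C_{K'K}$ are pairwise disjoint, I first obtain $a_{JKK}=\sum_{K'\in\widetilde K,\,K'\subseteq J}|{}^JW^K\cap C_{K'K}|$, reducing the theorem to evaluating each summand.

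Next I would identify the contributing representatives. Using the preceding lemma ($c_{K'K}N_K=N_{K'}\,c_{K'K}=\{w\in{}^{K'}W^K\mid w^{-1}W_{K'}w=W_K\}$) one finds $W^K\cap C_{K'K}=c_{K'K}N_K$, and in particular conjugation by $c_{K'K}$ maps $N_{K'}$ isomorphically onto $N_K$. Any $x\in{}^JW^K\cap C_{K'K}$ lies in $W^K\cap C_{K'K}=c_{K'K}N_K$, so $|{}^JW^K\cap C_{K'K}|=|\{n\in N_K\mid c_{K'K}n\in{}^JW\}|$. Writing $x=n'c_{K'K}$ with $n'\in N_{K'}$ and putting $H:=W_J\cap N_{K'}$, one checks (using $K'\subseteq J$) that $H$ is exactly the Howlett complement of $W_{K'}$ computed inside $W_J$, so $N_{W_J}(W_{K'})=W_{K'}\rtimes H$. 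A direct computation then shows that the left coset $W_Jx$ depends only on the class of $n'$ in $H\backslash N_{K'}$ and that the corresponding fibre has $|H|$ elements; hence $c_{K'K}N_K$ meets exactly $[N_{K'}:H]=\frac{|N_K|}{|W_J\cap N_{K'}|}$ distinct left $W_J$ cosets, meeting each such coset in $|H|$ of its points.

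Because every left $W_J$ coset contains a unique element of minimal length, at most one point of $c_{K'K}N_K$ in a given coset can lie in ${}^JW$. The computation therefore closes once I show that the minimal representative of each of these $W_J$ cosets actually stays inside $c_{K'K}N_K$ precisely when $c_{K'K}\in{}^JW$, i.e. when $K'\in H_{JK}$, and that otherwise none of them does; this yields $|{}^JW^K\cap C_{K'K}|=\frac{|N_K|}{|W_J\cap N_{K'}|}$ for $K'\in H_{JK}\cap 2^J$ and $0$ for the remaining $K'\in\widetilde K\cap 2^J$, and summing over $K'$ gives the asserted formula. As an independent check I would instead count the $(J,K)$ double cosets inside the left-$W_J$- and right-$N_W(W_K)$-stable set $\{y\mid yW_Ky^{-1}\subseteq W_J\}$; grouping by the $W_J$-conjugacy class of $yW_Ky^{-1}$ and using $N_W(W_{K'})=W_{K'}\rtimes N_{K'}$ gives $a_{JKK}=\sum_{[K']}\frac{|N_K|}{|W_J\cap N_{K'}|}$, one term per $W_J$-class, so the two expressions agree exactly when $H_{JK}\cap 2^J$ contains precisely one representative of each $W_J$-class.

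The main obstacle is the minimality statement in the previous paragraph, and it is genuinely delicate: left multiplication by $W_J$ does not preserve the right-$W_K$-reduced set $c_{K'K}N_K$, and in fact $\ell(n'c_{K'K})$ is in general far smaller than $\ell(n')+\ell(c_{K'K})$, so no naive length additivity is available. Equivalently one must prove that among the $W_J$-conjugate standard parabolics $W_{K'}$ (with $K'\subseteq J$, $K'\in\widetilde K$) exactly one has its minimal conjugator $c_{K'K}$ left-$J$-reduced. I expect to handle this through Howlett's description of minimal coset representatives, namely that an element $x\in W^K$ with $xW_Kx^{-1}$ standard carries the simple system of $W_K$ onto that of $xW_Kx^{-1}$; combined with left-$J$-reducedness (so that $x^{-1}$ keeps the simple roots of $W_J$ positive) this rigidity should force every minimal $(J,K)$ double coset representative with $xW_Kx^{-1}\subseteq W_J$ to conjugate $W_K$ onto one and the same distinguished standard parabolic, which is precisely the vanishing and the one-per-class selection needed to pin the index set down to $H_{JK}\cap 2^J$.
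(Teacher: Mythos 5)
Your proposal follows the same route as the paper's own proof: decompose the count defining $a_{JKK}$ according to the standard parabolic $W_{K'}=wW_Kw^{-1}$ with $K'\in\widetilde K\cap 2^J$, invoke the preceding lemma identifying $\{w\in{}^{K'}W^K\mid w^{-1}W_{K'}w=W_K\}$ with $c_{K'K}N_K=N_{K'}c_{K'K}$, and then count the left-$J$-reduced elements of $N_{K'}c_{K'K}$ as $({}^JN_{K'})c_{K'K}$, of cardinality $|N_{K'}|/|W_J\cap N_{K'}|=|N_K|/|W_J\cap N_{K'}|$. Up to that point your reductions are sound; your coset count $[N_{K'}:W_J\cap N_{K'}]$ with fibres of size $|W_J\cap N_{K'}|$ is exactly the paper's $|{}^JN_{K'}|$.

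The genuine gap is the step you yourself flag as the main obstacle: you never prove that the minimal element of each coset $W_Jn'c_{K'K}$ stays inside $N_{K'}c_{K'K}$ when $c_{K'K}={}^Jc_{K'K}$, nor that ${}^JW^K\cap C_{K'K}=\varnothing$ when $c_{K'K}\neq{}^Jc_{K'K}$; ``I expect to handle this through Howlett's description'' is a plan, not an argument. Without these two facts the formula does not follow --- they are precisely what separates the corrected formula from the erroneous one of Bergeron--Bergeron--Howlett--Taylor, since for $W_J$-conjugate $K',K''\subseteq J$ exactly one of the two must survive (compare the $H_3$ counterexample in the appendix, where $\{s_1\}$ and $\{s_2\}$ are both contained in $J=\{s_1,s_2\}$ and $W_J$-conjugate, and the naive sum doubles the contribution $4$ to $8$). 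To be fair, the paper's own proof asserts both claims in a single sentence with no more justification than you give, so you have located the real content of the theorem exactly; but as written your proposal, like the paper's argument, does not establish it. Your closing ``independent check'' --- grouping $\{y\mid yW_Ky^{-1}\subseteq W_J\}$ by the $W_J$-conjugacy class of $yW_Ky^{-1}$ and using $N_W(W_{K'})=W_{K'}\rtimes N_{K'}$ --- is actually the more promising route to a complete proof, since it produces one term $|N_K|/|W_J\cap N_{K'}|$ per $W_J$-class without ever invoking reducedness of $c_{K'K}$; what would then remain is only to show that $H_{JK}\cap 2^J$ is a transversal of those classes.
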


\begin{proof} Recall that \begin{displaymath}a_{JKK} := \big| \{ w \in {}^J W^K\ |\ W_J \cap w W_K w^{-1} = w W_K w^{-1}\} \big|,\end{displaymath}
with \begin{math}w W_K w^{-1} =  W_{K'} \end{math} and \begin{math}K' \in  \widetilde{K} \cap 2^J  \end{math}.\\
We have \begin{math}w \in  c_{K' K}  N_K =  N_{K'}\, c_{K' K} \end{math}. But we must also have \begin{math}w = {}^J w\end{math}. That means:
\begin{itemize}
\item On the one hand, we must have \begin{math}c_{K' K} = {}^J c_{K' K}\end{math}. Otherwise \begin{math} {}^J(c_{K' K}N_K)  \cap  C_{K' K} = \varnothing\end{math}. 
\item On the other hand, if \begin{math}c_{K' K} = {}^J c_{K' K}\end{math}, then \begin{math} w \in {}^J(N_{K'}\, c_{K' K}) = ({}^J N_{K'})\,c_{K' K} \end{math}.
\end{itemize}
Since \begin{displaymath}\big|{}^J N_{K'}\big| = \frac{|N_{K'}|}{|W_J \cap N_{K'}|}, \end{displaymath} it follows that
\begin{align*}
a_{JKK} =\ & \sum_{K' \in H_{JK} \cap 2^J} \frac{|N_{K'}|}{|W_J \cap N_{ K'}|}\\ 
=\ & \sum_{K' \in H_{JK} \cap 2^J} \frac{|N_K|}{|W_J \cap N_{ K'}|}.
\end{align*}
since \begin{math}W_K\end{math} and \begin{math}W_{K'}\end{math} are conjugate.
\end{proof}

\noindent Let \begin{math}J' \in \widetilde{J}\end{math} and \begin{math}K' \in \widetilde{K}\end{math}. From Theorem \ref{new}, we deduce that 
\begin{displaymath}a_{JKK} = a_{J'K'K'}.\end{displaymath} This result can also be found in \cite[Theorem~6.2]{BBHT 1992}.

\section{Eigenvalues and Multiplicities}

\noindent We are now able to determine the eigenvalues and their corresponding multiplicities.\\
\noindent Let \begin{math}d = \sum_{J \subseteq S} \lambda_J x_J \in \mathbb{K}[\Xi_W]\end{math}. We write \begin{math}v_{\Xi_W}(d)\end{math}
for the column vector of \begin{math}d\end{math} relative to the basis \begin{math}\Xi_W\end{math}, and \begin{math}M_{\Xi_W}(d)\end{math} for the
matrix of the left-multiplication action of \begin{math}d\end{math} on \begin{math}\mathbb{K}[\Xi_W]\end{math} relative the basis
\begin{math}\Xi_W\end{math} i.e. \begin{displaymath}v_{\Xi_W}(d)=(\lambda_K)_{K \subseteq S}\ \ \text{and}\ \
M_{\Xi_W}(d) = \big( \sum_{J \subseteq S} \lambda_J a_{JKL} \big)_{K, L \subseteq S}.\end{displaymath}

\noindent Let \begin{math}n \geq 2\end{math} and \begin{math}(d_i)_{i \in [n]} \in \mathbb{K}[\Xi_W]^n\end{math}. We have
\begin{displaymath}v_{\Xi_W}(\prod_{i \in [n]}^{\rightarrow} d_i) =
\big( \prod_{i \in [n-1]}^{\rightarrow}M_{\Xi_W}(d_i) \big) \cdot v_{\Xi_W}(d_n).\end{displaymath}

\noindent Recall that the noncommutative multiplication is defined in the following way:
\begin{displaymath}\prod_{i \in [n]}^{\rightarrow} \lambda_i = \lambda_1 \lambda_2 \dots \lambda_n.\end{displaymath}

\noindent We write \begin{math}M_{\Xi_W}(d)_{|\bullet,K}\end{math} for the column of \begin{math}M_{\Xi_W}(d)\end{math} corresponding to the basis
vector \begin{math}x_K\end{math}, i.e.
\begin{displaymath}M_{\Xi_W}(d)_{|\bullet,K} = \big( \sum_{J \subseteq S} \lambda_J a_{JKL} \big)_{L \subseteq S} =v_{\Xi_W}(d \cdot x_K).\end{displaymath}

\newtheorem{relation}[MapCla]{Lemma}
\begin{relation} \label{relation}
Let \begin{math}d \in \mathbb{K}[\Xi_W]\end{math}. Then \begin{math}R_W(d)\end{math} and \begin{math}M_{\Xi_W}(d)\end{math} have the same spectrum.
\end{relation}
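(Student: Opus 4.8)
The plan is to reduce the asserted equality of spectra to an equality of minimal polynomials. Observe first that $R_W(d)$ and $M_{\Xi_W}(d)$ are merely two matrix descriptions of one and the same operation, namely left multiplication by $d$, carried out respectively on the group algebra $\mathbb{K}[W]$ and on its subalgebra $\mathbb{K}[\Xi_W]$. Both of these are unital, and the decisive point is that they share the \emph{same} identity element: since $W^S$ reduces to $\{e\}$ we have $x_S = e$, so $x_S$ is the unit of $\mathbb{K}[W]$ and, belonging to $\Xi_W$, it is simultaneously the unit of $\mathbb{K}[\Xi_W]$.

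The key step I would invoke is the standard fact that the left regular representation of a unital finite-dimensional algebra is faithful, because the operator of left multiplication by $a$ applied to the unit recovers $a$. Writing $L_d$ for left multiplication by $d$ on either algebra, the map $a \mapsto L_a$ is a unital algebra homomorphism, whence $p(L_d) = L_{p(d)}$ for every polynomial $p \in \mathbb{K}[t]$; by faithfulness $p(L_d) = 0$ if and only if $p(d) = 0$. Consequently the minimal polynomial of each of the two operators coincides with the minimal polynomial of the \emph{element} $d$, as computed inside the corresponding algebra.

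Finally, because $\mathbb{K}[\Xi_W]$ is a subalgebra of $\mathbb{K}[W]$ that contains $d$ and shares its identity, the powers $d^k$ and every evaluation $p(d)$ are identical whether formed in $\mathbb{K}[\Xi_W]$ or in $\mathbb{K}[W]$; hence the annihilator ideal $\{p \in \mathbb{K}[t] : p(d) = 0\}$ is intrinsic to $d$ and independent of the ambient algebra. It follows that $R_W(d)$ and $M_{\Xi_W}(d)$ have the same minimal polynomial, therefore the same set of roots, and since $\mathbb{K}$ is algebraically closed this root set is precisely the spectrum of each matrix. The one point that must be handled with care, and which is really the only obstacle here, is that ``same spectrum'' is to be read as equality of the \emph{sets} of eigenvalues and not of their multiplicities: the two matrices have different sizes, $|W|$ against $2^{|S|}$, so their characteristic polynomials cannot agree, and for this reason the argument deliberately routes through the minimal polynomial, whose roots disregard multiplicity.
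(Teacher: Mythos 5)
Your proof is correct. It differs in packaging from the paper's, which argues the two inclusions asymmetrically: the containment $Sp(M_{\Xi_W}(d)) \subseteq Sp(R_W(d))$ is dismissed as clear (it holds because $\mathbb{K}[\Xi_W]$ is a $d$-invariant subspace of $\mathbb{K}[W]$ and $M_{\Xi_W}(d)$ is the restriction of $R_W(d)$ to it), while the reverse containment is obtained by taking the characteristic polynomial $\sum_i \mu_i t^i$ of $M_{\Xi_W}(d)$, invoking Cayley--Hamilton, and reading off the resulting matrix identity on the column indexed by $S$: since $x_S = e$, that column of $M_{\Xi_W}(d)^i$ is $v_{\Xi_W}(d^i)$, so the characteristic polynomial annihilates the element $d$ and hence the operator $R_W(d)$. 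You replace this two-step argument by the single observation that the left regular representation of a unital finite-dimensional algebra is faithful because $L_a$ applied to the unit recovers $a$, so that the minimal polynomial of either operator equals the minimal polynomial of the element $d$, which is intrinsic once the subalgebra contains the unit $x_S = e$. This is the same key mechanism the paper exploits --- its column-of-$S$ computation is exactly $p(L_d)(e) = p(d)$ written in coordinates --- but your version is symmetric, dispenses with Cayley--Hamilton and with the invariant-subspace half, and makes explicit a caveat the paper leaves implicit: the asserted equality is of eigenvalue \emph{sets}, not multiplicities (the matrices have sizes $|W|$ and $2^{|S|}$), which is precisely why the argument must run through the minimal rather than the characteristic polynomial.
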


\begin{proof} It is clear that \begin{math}Sp\big(M_{\Xi_W}(d)\big) \subseteq Sp\big(R_W(d)\big)\end{math}.\\
We write \begin{math}0_{2^{|S|}}\end{math} for the matrix with entry \begin{math}0\end{math} of \begin{math}\mathbb{K}^{2^{|S|} \times 2^{|S|}}\end{math},
and \begin{math}I_{2^{|S|}}\end{math} for the identity matrix of \begin{math}\mathbb{K}^{2^{|S|} \times 2^{|S|}}\end{math}.
Let \begin{math}\sum_{i=0}^{2^{|S|}}\mu_i t^i\end{math} be the characteristic polynomial of \begin{math}M_{\Xi_W}(d)\end{math} in the variable
\begin{math}t\end{math} with \begin{math}(\mu_i)_{i \in \{0\} \cup [2^{|S|}]} \in \mathbb{K}^{2^{|S|}+1}\end{math}. We have
\begin{math}\sum_{i=0}^{2^{|S|}}\mu_i M_{\Xi_W}(d)^i = 0_{2^{|S|}}\end{math}, especially
\begin{displaymath}\mu_0 I_{2^{|S|}|\bullet,S}  + \sum_{i=1}^{2^{|S|}}\mu_i M_{\Xi_W}^{i-1}(d) \cdot M_{\Xi_W}(d)_{|\bullet,S} = 0_{2^{|S|}|\bullet,S}.\end{displaymath}
Since \begin{math}I_{2^{|S|}|\bullet,S}= v_{\Xi_W}(e)\end{math}, and \begin{math}M_{\Xi_W}(d)_{|\bullet,S} = v_{\Xi_W}(d)\end{math}, then
\begin{displaymath}\mu_0 v_{\Xi_W}(e) + \sum_{i=1}^{2^{|S|}}\mu_i \, M_{\Xi_W}^{i-1}(d) \cdot v_{\Xi_W}(d) = v_{\Xi_W}(0).\end{displaymath}
This means \begin{math}\mu_0 e + \sum_{i=1}^{2^{|S|}}\mu_i d^i=0\end{math} and \begin{math}Sp\big(R_W(d)\big) \subseteq Sp\big(M_{\Xi_W}(d)\big)\end{math}.
\end{proof}

\noindent For the rest of the section, we need a total order \begin{math}\succ\end{math} on the subsets of \begin{math}S = \{s_i\}_{i \in \big[|S|\big]}\end{math}
which was introduced by F. and N. Bergeron \cite{BB 1992}:
We define \begin{math}\min\,J :=\min \{i \in \big[|S|\big]\,|\,s_i \in J\} \end{math}, and assume that \begin{math}\min \varnothing = |S|+1\end{math}.
Let \begin{math}J, K \subseteq S\end{math} such that \begin{math}J \neq K\end{math}.
\begin{itemize}
\item If \begin{math}\min\,J > \min\,K \end{math} then \begin{math}J \succ K\end{math}.
\item Otherwise \begin{math}J \succ K\end{math} if and only if \begin{math}J \setminus \{s_{\min\,J}\} \succ K \setminus \{s_{\min\,K}  \}. \end{math}
\end{itemize}

\noindent We have already seen the definition of the set \begin{math}\{\widetilde{J_i}\}_{i \in [p]}\end{math} in the introduction.\\ 
Let \begin{math}K_i\end{math} be the element of \begin{math}\widetilde{J_i}\end{math} such that \begin{math}L_i \succ K_i\end{math}
for all \begin{math}L_i \in \widetilde{J_i} \setminus \{K_i\} \end{math}. We order the sets
\begin{math}\{\widetilde{J_i}\}_{i \in [p]}\end{math} such that \begin{math}K_i \succ K_j\end{math} if \begin{math}i<j\end{math}.

\newtheorem{eigenwerte}[MapCla]{Proposition}
\begin{eigenwerte} \label{eigen}
Let \begin{math}d = \sum_{J \subseteq S} \lambda_J x_J \in \mathbb{K}[\Xi_W]\end{math}. Then the spectrum of the matrix \begin{math}R_W(d)\end{math} is
\begin{displaymath}Sp\big(R_W(d)\big) = \Big\{\sum_{i=1}^p  a_{K_i K_j K_j} \big( \sum_{L_i \in \widetilde{J_i}} \lambda_{L_i}\big) \Big\}_{j \in [p]}.\end{displaymath}
\end{eigenwerte}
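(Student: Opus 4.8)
The plan is to sidestep the unwieldy $|W| \times |W|$ matrix $R_W(d)$ entirely and work instead with the much smaller matrix $M_{\Xi_W}(d) = \big(\sum_{J \subseteq S}\lambda_J a_{JKL}\big)_{K,L \subseteq S}$, which is legitimate because Lemma \ref{relation} already guarantees $Sp\big(R_W(d)\big) = Sp\big(M_{\Xi_W}(d)\big)$. The entire proposition thus reduces to computing the spectrum of $M_{\Xi_W}(d)$, whose $(L,K)$ entry is the coefficient of $x_L$ in $d \cdot x_K$. The crucial structural input is Solomon's multiplication rule $x_J x_K = \sum_{L \subseteq K} a_{JKL}\,x_L$, which forces $a_{JKL} = 0$ whenever $L \not\subseteq K$; hence the $(L,K)$ entry vanishes unless $L \subseteq K$, so $M_{\Xi_W}(d)$ is triangular with respect to containment.

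To convert this into honest triangularity I would list the $2^{|S|}$ subsets of $S$ in $\succ$-decreasing order, where $\succ$ is the Bergeron order. The key fact to verify is that $\succ$ refines reverse inclusion, i.e. $J \subsetneq K \Rightarrow J \succ K$. This follows by induction: since $J \subseteq K$ gives $\min J \geq \min K$, either $\min J > \min K$ and the first clause yields $J \succ K$ directly, or $\min J = \min K =: m$, in which case $s_m$ lies in both sets, $J \setminus \{s_m\} \subsetneq K \setminus \{s_m\}$, and the inductive hypothesis together with the second clause gives $J \succ K$. With this ordering, any nonzero off-diagonal entry $(L,K)$ has $L \subsetneq K$, hence $L \succ K$, so $L$ precedes $K$; thus $M_{\Xi_W}(d)$ is upper triangular and its eigenvalues, counted with multiplicity, are the diagonal entries $\sum_{J \subseteq S} \lambda_J a_{JKK}$ for $K \subseteq S$.

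The final step is to collapse these $2^{|S|}$ diagonal entries into the $p$ claimed values. I would group the index $J$ by conjugacy class, writing $\sum_{J \subseteq S}\lambda_J a_{JKK} = \sum_{i=1}^{p}\sum_{J \in \widetilde{J_i}} \lambda_J a_{JKK}$, and invoke the invariance $a_{JKK} = a_{J'K'K'}$ (valid for $J' \in \widetilde{J}$, $K' \in \widetilde{K}$) recorded right after Theorem \ref{new}. For $K \in \widetilde{J_j}$ and $J \in \widetilde{J_i}$ this invariance gives $a_{JKK} = a_{K_i K_j K_j}$ independently of the representatives, so the diagonal entry equals $\sum_{i=1}^{p} a_{K_i K_j K_j}\big(\sum_{J \in \widetilde{J_i}} \lambda_J\big)$ and depends only on the class $\widetilde{J_j}$ of $K$. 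Hence the diagonal takes the value $\sum_{i=1}^{p} a_{K_i K_j K_j}\big(\sum_{L_i \in \widetilde{J_i}} \lambda_{L_i}\big)$ on every $K \in \widetilde{J_j}$, its set of distinct entries is exactly $\big\{\sum_{i=1}^{p} a_{K_i K_j K_j}\big(\sum_{L_i \in \widetilde{J_i}} \lambda_{L_i}\big)\big\}_{j \in [p]}$, and Lemma \ref{relation} transfers this to $Sp\big(R_W(d)\big)$.

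The main obstacle is the second step: one must confirm that the Bergeron order genuinely linearizes reverse inclusion, since without this Solomon's rule would not produce an authentically triangular matrix and the diagonal entries would not be forced to be the eigenvalues. Once triangularity is secured, the third step is essentially bookkeeping driven by the conjugacy invariance of $a_{JKK}$; I would only remark in passing that each value occurs $|\widetilde{J_j}|$ times along the diagonal, deferring the precise multiplicity statement $m_{\varDelta_j} = |\overline{c_{J_j}}|$ to the proof of the main theorem, where the ordering of the classes by $K_i \succ K_j$ becomes relevant.
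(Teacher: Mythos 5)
Your proposal is correct and follows essentially the same route as the paper: reduce to $M_{\Xi_W}(d)$ via Lemma \ref{relation}, order the basis $\Xi_W$ by the Bergeron order to make the matrix upper triangular (the paper merely asserts $d\cdot x_{L_j}\in\big<\{x_{L_i}\}_{i\in[j]}\big>$, whereas you supply the missing verification that $\succ$ refines reverse inclusion via Solomon's rule $a_{JKL}=0$ for $L\not\subseteq K$), read off the diagonal entries $\sum_{J\subseteq S}\lambda_J a_{JKK}$, and collapse them using the conjugacy invariance $a_{JKK}=a_{J'K'K'}$. The only difference is that you make explicit two steps the paper leaves implicit, which is a strengthening rather than a deviation.
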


\begin{proof} We order the basis \begin{math}(x_J)_{J \subseteq S}\end{math} of \begin{math}\mathbb{K}[\Xi_W]\end{math} according
to the total order \begin{math}\succ\end{math} of F. and N. Bergeron that means we get a new ordered basis \begin{math}(x_{L_i})_{i \in [2^{|S|}]}\end{math}
such that \begin{math}L_i \succ L_j\end{math} if \begin{math}i < j\end{math}. We assume that the rows and columns of the matrix
\begin{math}M_{\Xi_W}(d)\end{math} are ordered in increasing order by the order \begin{math}\succ\end{math}. We get
\begin{math}d \cdot x_{L_j} \in \big<\{x_{L_i}\}_{i \in [j]}\big>  \end{math}. Then the matrix of \begin{math}d\end{math} on the basis
\begin{math}(x_{L_i})_{i \in [2^{|S|}]}\end{math} is an upper triangular matrix. The characteristic polynomial of this matrix in the variable
\begin{math}t\end{math} is \begin{displaymath}\prod_{K \subseteq S} \big( (\sum_{J \subseteq S} \lambda_J  a_{JKK}) -t \big),\end{displaymath} 
and \begin{displaymath}Sp(M_{\Xi_W}(d)) = \big\{ \sum_{J \subseteq S} \lambda_J a_{JKK} \big\}_{K \subseteq S}.   \end{displaymath}
Since \begin{math}a_{JKK} = a_{J' K'K'} \end{math} for all \begin{math}J, J' \in \widetilde{J_i}\end{math}, and all
\begin{math}K, K' \in \widetilde{J_j}\end{math}, we get the result.
\end{proof}

\newtheorem{multipli}[MapCla]{Proposition}
\begin{multipli} \label{multipli}
Let \begin{math}d = \sum_{J \subseteq S} \lambda_J x_J \in \mathbb{K}[\Xi_W]\end{math}, and
\begin{math}\varDelta_j = \sum_{i=1}^p a_{K_i K_j K_j} \big( \sum_{L_i \in \widetilde{J_i}} \lambda_{L_i} \big)\end{math}. Then the multiplicity of the
eigenvalue \begin{math}\varDelta_j\end{math} of \begin{math}R_W(d)\end{math} is \begin{displaymath}m_{\varDelta_j} = |\overline{c_{J_j}}|.\end{displaymath}
\end{multipli}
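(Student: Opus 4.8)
The plan is to pass from the eigenvalues, already identified in Proposition~\ref{eigen}, to the algebraic multiplicities of $R_W(d)$ by exploiting the structure of the descent algebra $\mathbb{K}[\Xi_W]$ acting on $\mathbb{K}[W]$ by left multiplication. By Lemma~\ref{relation} the spectrum is exactly $\{\varDelta_j\}_{j\in[p]}$, but whereas these eigenvalues are read off from the $2^{|S|}\times 2^{|S|}$ matrix $M_{\Xi_W}(d)$, the multiplicities live in the $|W|$-dimensional regular representation, so a genuinely finer argument is needed. First I would note that $R_W$ is an algebra homomorphism, so $R_W(d)^k=R_W(d^k)$, and that for any $u=\sum_{w}\mu_w w$ one has $\mathrm{tr}\,R_W(u)=|W|\,\mu_e$; thus everything is ultimately controlled by the coefficient of the identity $e$ in elements of the descent algebra.

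The key reduction is an idempotent (Jordan--Chevalley) decomposition inside $\mathbb{K}[\Xi_W]$. Since the semisimple quotient $\mathbb{K}[\Xi_W]/\mathrm{rad}\,\mathbb{K}[\Xi_W]$ is commutative of dimension $p$, with one factor per conjugacy class $\widetilde{J_j}$ of parabolic subgroups, I would lift its standard basis to orthogonal idempotents $e_1,\dots,e_p\in\mathbb{K}[\Xi_W]$ with $\sum_j e_j=e$, and write the semisimple part of $d$ as $s=\sum_{j}\varDelta_j e_j$ with $d-s\in\mathrm{rad}\,\mathbb{K}[\Xi_W]$ nilpotent. Because $R_W(d-s)$ is then a nilpotent operator commuting with the diagonalizable operator $R_W(s)$, the algebraic multiplicity of $\varDelta_j$ in $R_W(d)$ equals the dimension of the corresponding eigenspace of $R_W(s)$. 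As the $R_W(e_j)$ are pairwise orthogonal idempotent operators with $\sum_j R_W(e_j)=\mathrm{Id}$, that eigenspace is the image $e_j\,\mathbb{K}[W]$, whence
\[
m_{\varDelta_j}=\dim\big(e_j\,\mathbb{K}[W]\big)=\mathrm{rank}\,R_W(e_j)=\mathrm{tr}\,R_W(e_j)=|W|\cdot\big(\text{coefficient of }e\text{ in }e_j\big),
\]
with the usual convention that coinciding $\varDelta_j$ have their multiplicities added.

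The main obstacle, and the real content of the statement, is then to evaluate this coefficient and to show $\dim\big(e_j\,\mathbb{K}[W]\big)=|\overline{c_{J_j}}|$. Here I would try to identify $e_j$ with the descent-algebra idempotent attached to the class $\widetilde{J_j}$ (the analogue, for general $(W,S)$, of the Garsia--Reutenauer idempotents in type $A$), and to exhibit either an explicit basis of $e_j\,\mathbb{K}[W]$ indexed by the conjugacy class $\overline{c_{J_j}}$ of the Coxeter element $c_{J_j}$, or an explicit expansion of $e_j$ in the basis $\Xi_W$; the coefficients $a_{K_iK_jK_j}$ of Theorem~\ref{new} should enter precisely at this point, since they govern how $s$, and hence each $e_j$, acts on $\mathbb{K}[\Xi_W]$. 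This dimension count is the delicate step: it forces the consistency requirement $\sum_{j=1}^p|\overline{c_{J_j}}|=|W|$, and I would first test this identity on small Coxeter systems, because it is exactly the place where the interaction between Coxeter elements of parabolic subgroups and the conjugacy classes of $W$ must be pinned down before the general argument can be trusted.
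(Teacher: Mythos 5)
Your reduction of the problem to a rank computation for a complete system of orthogonal idempotents is a legitimate and classical way to organize this result (it is essentially the viewpoint of Brown and of Bergeron--Bergeron--Howlett--Taylor), but as written it has a genuine gap precisely where you yourself locate the ``real content'': you never prove that $\dim\big(e_j\,\mathbb{K}[W]\big)=|\overline{c_{J_j}}|$, you only announce that you \emph{would try} to identify $e_j$ with a Garsia--Reutenauer-type idempotent and to exhibit a basis of $e_j\,\mathbb{K}[W]$ indexed by $\overline{c_{J_j}}$. That identification \emph{is} the proposition; without it your argument only shows that the multiplicities are the traces of some lifted idempotents. There is also a secondary technical flaw: $d-s\in\mathrm{rad}\,\mathbb{K}[\Xi_W]$ is nilpotent but need not commute with $s$ in the noncommutative algebra $\mathbb{K}[\Xi_W]$, so $R_W(s)$ is not literally the semisimple part of $R_W(d)$ in the Jordan--Chevalley sense, and the subspaces $e_j\,\mathbb{K}[W]$ need not be $d$-stable. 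The correct route to $m_{\varDelta_j}=\dim e_j\,\mathbb{K}[W]$ is a composition series of $\mathbb{K}[W]$ over $\mathbb{K}[\Xi_W]$, whose factors are one-dimensional and carry the $p$ characters of the commutative semisimple quotient; this repairs the reduction but still leaves the dimension count unproved.

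The paper closes exactly the gap you leave open, and does so without constructing any idempotents. Set $A=(a_{K_iK_jK_j})_{i,j\in[p]}$, which in the Bergeron order is triangular with nonzero diagonal entries $a_{K_jK_jK_j}=|N_{K_j}|$, hence invertible. Computing $\mathrm{tr}\,R_W(d)$ in two ways --- as $|W|$ times the coefficient of $e$ in $d$, namely $|W|\sum_{J\subseteq S}\lambda_J$, and as $\sum_j m_{\varDelta_j}\varDelta_j$ with $\varDelta_j$ given by Proposition~\ref{eigen} --- and comparing the coefficient of $\lambda_{L_i}$ for $L_i\in\widetilde{J_i}$ (the $\lambda_J$ being independent indeterminates, and the $m_{\varDelta_j}$ being constant in the $\lambda_J$) yields $\sum_j a_{K_iK_jK_j}\,m_{\varDelta_j}=|W|$ for every $i$, i.e. $Am=u$ with $u=(|W|,\dots,|W|)^t$. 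The identity $A^{-1}u=\big(|\overline{c_{J_j}}|\big)_{j\in[p]}$, proved at the end of Section~6 of \cite{BBHT 1992}, then gives $m_{\varDelta_j}=|\overline{c_{J_j}}|$. Note that even within your framework you would need this combinatorial input (or an equivalent count relating Coxeter elements of parabolics to conjugacy classes of $W$); it does not follow from the algebra structure alone, which is why your proposal, as it stands, does not establish the statement.
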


\begin{proof} Let \begin{math}A = (a_{K_i K_j K_j})_{i,j \in [p]}\end{math}, \begin{math}m = (m_{\varDelta_j})_{j \in [p]}\end{math},
\begin{math}u = (|W|)_{j \in [p]}\end{math}, and \begin{math}c = (|\overline{c_{J_j}}|)_{j \in [p]}\end{math}. At the end of the sixth section of
\cite{BBHT 1992}, it is proved that \begin{math}A^{-1} u = c\end{math}.\\ 
\noindent Let \begin{math}tr\end{math} be the trace map of square matrix. We have \begin{math}tr\big( R_W(d) \big) = |W| \sum_{J \subseteq S} \lambda_J\end{math}.
Then, \begin{math}\sum_{i=1}^j a_{K_j K_i K_i}\, m_{\varDelta_i}\, \lambda_{L_j} = |W| \lambda_{L_j} \end{math} for \begin{math} L_j \in \widetilde{J_j}\end{math}
i.e. \begin{displaymath}\sum_{i=1}^j a_{K_j K_i K_i}\, m_{\varDelta_i} = |W|.\end{displaymath}
In matrix form, we get \begin{math}A m = u\end{math}. Thus \begin{math}A^{-1} u = m\end{math}.
\end{proof}

\noindent We note that with the matrix relation \begin{math}A m = u\end{math}, we can also get the cardinalities of the conjugacy classes of
\begin{math}W\end{math}.

\appendix

\section{Example of the Symmetry Group of 24-cell}

\noindent Recall that the Coxeter system of the symmetry group of 24-call with cardinality \begin{math}1152\end{math} is
\begin{math}(F_4, S_{F_4} = \{s_i \}_{i \in [4]})\end{math}, and its Coxeter graph is
\begin{displaymath}s_1  \longleftrightarrow  s_2 \stackrel{4}{\longleftrightarrow}  s_3  \longleftrightarrow s_4\end{displaymath}
Using Theorem \ref{new} and the values of \begin{math}|N_K|\end{math} in \cite[page~74]{Ho 1980}, we get the following values of
\begin{math}a_{JKK}\end{math} for the case of \begin{math}F_4\end{math}:

{\footnotesize
{\renewcommand{\arraystretch}{1.5}
\renewcommand{\tabcolsep}{0.1cm}
\begin{center}
\begin{tabular}{|c|c|c|c|c|c|c|c|c|c|c|c|c|}
\hline
   & \begin{math} \varnothing \end{math}   & \begin{math} \{s_1\} \end{math}   &  \begin{math} \{s_4\} \end{math}   & \begin{math} \{s_1, s_2 \} \end{math}   &  
\begin{math} \{s_2, s_3 \} \end{math}   &  \begin{math}\{s_3, s_4 \}  \end{math}  &   \begin{math}\{s_1, s_4 \}  \end{math}  &  \begin{math}\{s_1, s_2, s_3 \} \end{math}  & 
\begin{math}\{s_2, s_3, s_4 \} \end{math}  &    \begin{math}\{s_1, s_3, s_4 \} \end{math}  & \begin{math}\{s_1, s_2, s_4 \} \end{math}  &    \begin{math} S_{F_4} \end{math}   \\  \cline{1-13}
\begin{math} \varnothing \end{math}    &  1152  &  0 & 0  & 0 & 0 & 0  &  0 &  0 &  0 & 0  & 0 & 0 \\  \cline{1-13}
\begin{math} \{s_1\} \end{math}   &  576  &  48   & 0  & 0   &  0  & 0  &  0   & 0   &  0 & 0 & 0  & 0 \\  \cline{1-13}
\begin{math} \{s_4 \} \end{math}    &  576  & 0  & 48   & 0   & 0  &  0 & 0   & 0    &  0 & 0  & 0  & 0 \\  \cline{1-13}
\begin{math} \{s_1, s_2 \} \end{math}    &  192  & 48  & 0  &  12  & 0  & 0  & 0  &  0  &  0  & 0  & 0   &  0\\  \cline{1-13}
\begin{math} \{s_2, s_3 \} \end{math}    &  144  & 24  & 24  &  0  & 8  & 0  & 0  &  0  &  0  & 0  & 0  & 0 \\  \cline{1-13}
\begin{math} \{s_3, s_4 \} \end{math}    &  192  & 0  & 48  &  0  & 0  & 12  & 0  &  0  &  0  & 0  & 0  & 0  \\  \cline{1-13}
\begin{math} \{s_1, s_4 \} \end{math}    & 288   & 24  & 24   &  0 &  0  &  0 &  4   & 0   & 0& 0 & 0 & 0 \\  \cline{1-13}
\begin{math} \{s_1, s_2, s_3 \} \end{math}    &  24  & 24  & 6   & 12   & 4  &  0 &   0 &  2  & 0 & 0  & 0 & 0 \\  \cline{1-13}
\begin{math} \{s_2, s_3, s_4 \} \end{math}    &  24  & 6  & 24   & 0   & 4  &  12 &  0  &  0   & 2  &  0 &  0 &  0 \\  \cline{1-13}
\begin{math} \{s_1, s_3, s_4 \} \end{math}    &  96  & 8  & 24   & 0   & 0  &  6 &  4  &   0  & 0 & 2 & 0 & 0 \\  \cline{1-13}
\begin{math} \{s_1, s_2, s_4 \} \end{math}    &  96  & 24  & 8   & 6   & 0  &  0 &  4 &  0   & 0 & 0  & 2 & 0  \\  \cline{1-13}
\begin{math} S_{F_4} \end{math}    &  1   & 1  & 1  & 1 & 1   &  1 &  1   &  1   &  1 & 1  & 1  & 1 \\  \cline{1-13}
\hline
\end{tabular}
\end{center}}
}

\smallskip

\noindent We consider the element \begin{displaymath}d = \sum_{J \subseteq S_{F_4}} \lambda_J x_J \in \mathbb{K}[\Xi_{F_4}].\end{displaymath}  
The eigenvalues of \begin{math}R_{F_4}(u)\end{math} are
\small{
\begin{displaymath} \left.
\begin{array}{lll}
\varDelta_1 & = &  1152 \lambda_{\varnothing} + 576 ( \lambda_{\{s_1\}} + \lambda_{\{s_2\}} + \lambda_{\{s_3\}} +  \lambda_{\{s_4\}} ) + 192 (\lambda_{\{s_1, s_2 \}} + \lambda_{\{s_3, s_4 \}}) + 144 \lambda_{\{s_2, s_3 \}}\\ 
&& + 288 (\lambda_{\{s_1, s_3 \}} +  \lambda_{\{s_1, s_4\}} + \lambda_{\{s_2, s_4  \}}) + 24 ( \lambda_{\{s_1, s_2, s_3 \}} +     \lambda_{\{s_2, s_3, s_4 \}} ) + 96  ( \lambda_{\{s_1, s_3, s_4 \}} +  
\lambda_{\{s_1, s_2, s_4 \}} ) \\ \smallskip
&& + \lambda_{S_{F_4}}   \\  
  
\varDelta_2 & = &  48 (\lambda_{\{s_1\}} + \lambda_{\{s_2\}} +  \lambda_{\{s_1, s_2 \}})  + 24 ( \lambda_{\{s_2, s_3 \}} + \lambda_{\{s_1, s_3 \}} +  \lambda_{\{s_1, s_4\}} + \lambda_{\{s_2, s_4  \}} +  \lambda_{\{s_1, s_2, s_3 \}}+ 
\lambda_{\{s_1, s_2, s_4 \}} )  \\  \smallskip
&& + 6 \lambda_{\{s_2, s_3, s_4 \}} + 8 \lambda_{\{s_1, s_3, s_4 \}}  + \lambda_{S_{F_4}}  \\

\varDelta_3 & = &  48 (\lambda_{\{s_3\}} +  \lambda_{\{s_4\}} +  \lambda_{\{s_3, s_4 \}}) + 24 ( \lambda_{\{s_2, s_3 \}} +  \lambda_{\{s_1, s_3 \}} +  \lambda_{\{s_1, s_4\}} + \lambda_{\{s_2, s_4\}} +  \lambda_{\{s_2, s_3, s_4 \}} + 
\lambda_{\{s_1, s_3, s_4 \}}  )   \\  \smallskip
&&+ 6 \lambda_{\{s_1, s_2, s_3 \}} + 8  \lambda_{\{s_1, s_2, s_4 \}} + \lambda_{S_{F_4}} \\ \smallskip

\varDelta_4 & = &  12 \lambda_{\{s_1, s_2 \}} + 12 \lambda_{\{s_1, s_2, s_3 \}} + 6 \lambda_{\{s_1, s_2, s_4 \}}+ \lambda_{S_{F_4}}  \\  \smallskip

\varDelta_5 & = &  8  \lambda_{\{s_2, s_3 \}} + 4 (\lambda_{\{s_1, s_2, s_3 \}}+ \lambda_{\{s_2, s_3, s_4 \}}) + \lambda_{S_{F_4}} \\  \smallskip

\varDelta_6 & = &  12( \lambda_{\{s_3, s_4 \}} +  \lambda_{\{s_2, s_3, s_4 \}}) +  6 \lambda_{\{s_1, s_3, s_4 \}} + \lambda_{S_{F_4}}   \\  \smallskip

\varDelta_7 & = &  4  (\lambda_{\{s_1, s_3 \}} +  \lambda_{\{s_1, s_4\}} + \lambda_{\{s_2, s_4  \}}) + \lambda_{\{s_1, s_3, s_4 \}} +  \lambda_{\{s_1, s_2, s_4 \}} )+ \lambda_{S_{F_4}}  \\  \smallskip

\varDelta_8 & = & 2 \lambda_{\{s_1, s_2, s_3 \}}+ \lambda_{S_{F_4}}  \\   \smallskip

\varDelta_9 & = &  2 \lambda_{\{s_2, s_3, s_4 \}}+ \lambda_{S_{F_4}}  \\   \smallskip

\varDelta_{10} & = & 2 \lambda_{\{s_1, s_3, s_4 \}} + \lambda_{S_{F_4}}  \\  \smallskip

\varDelta_{11} & = & 2  \lambda_{\{s_1, s_2, s_4 \}} + \lambda_{S_{F_4}}  \\  \smallskip

\varDelta_{12} & = &  \lambda_{S_{F_4}}  
\end{array} \right.
\end{displaymath}}

\normalsize{\noindent with corresponding multiplicities}

\begin{displaymath}
\begin{array}{ccccc}
m_{\varDelta_1} & = &   1   & = &  |\overline{e}|  \\
m_{\varDelta_2} & = &   12   & = &  |\overline{s_1}|  \\
m_{\varDelta_3} & = &   12  & = &  |\overline{s_4}|  \\
m_{\varDelta_4} & = &   32   & = &  |\overline{s_1 s_2}|  \\
m_{\varDelta_5} & = &   54  &  = &  |\overline{s_2 s_3}|  \\
m_{\varDelta_6} & = &   32   &  = &  |\overline{s_3 s_4}|  \\
m_{\varDelta_7} & = &  72    &  = &  |\overline{s_1 s_4}|  \\
m_{\varDelta_8} & = &   84   &  = &  |\overline{s_1 s_2 s_3}|  \\
m_{\varDelta_9} & = &   84   &  = &  |\overline{s_2 s_3 s_4}|  \\
m_{\varDelta_{10}} & = &   96   &  = &  |\overline{s_1 s_3 s_4}|  \\
m_{\varDelta_{11}} & = &   96  &  = &  |\overline{s_1 s_2 s_4}|  \\
m_{\varDelta_{12}} & = &   577  &  = &  |\overline{s_1 s_2 s_3 s_4}|  
\end{array}
\end{displaymath}

\section{Counterexample on the Special Coefficients}

\noindent The following formula is proposed in \cite[Theorem~6.5]{BBHT 1992}: For \begin{math}J, K \subseteq S\end{math},
\begin{displaymath}a_{JKK} = \sum_{K' \in \widetilde{K} \cap 2^J} \frac{|N_K|}{|W_J \cap N_{K'}|}.\end{displaymath}

\noindent Recall that the Coxeter system of the symmetry group of the dodecahedron with cardinality \begin{math}120\end{math} is
\begin{math}(H_3, S_{H_3} = \{s_i \}_{i \in [3]})\end{math}, and its Coxeter graph is
\begin{displaymath}s_1 \stackrel{5}{\longleftrightarrow}  s_2  \longleftrightarrow s_3 \end{displaymath}
We have the values of \begin{math}|N_K|\end{math} for \begin{math}H_3\end{math} in \cite[page~79]{Ho 1980}. If we use the formula in
\cite[Theorem~6.5]{BBHT 1992}, we get the following values of \begin{math}a_{JKK}\end{math} for \begin{math}H_3\end{math}:

{\renewcommand{\arraystretch}{1.5}
\renewcommand{\tabcolsep}{0.1cm}
\begin{center}
\begin{tabular}{|c|c|c|c|c|c|c|}
\hline
   & \begin{math} \varnothing \end{math}   & \begin{math} \{s_1\} \end{math}   & \begin{math} \{s_1, s_2 \} \end{math}   & \begin{math}\{ s_2, s_3 \} \end{math}
  & \begin{math}\{s_1,  s_3 \}  \end{math}  &   \begin{math} S_{H_3} \end{math}   \\  \cline{1-7}
\begin{math} \varnothing \end{math}    &  120  &  0 & 0  & 0 & 0 & 0    \\  \cline{1-7}
\begin{math} \{s_1\} \end{math}   &  60  &  4   & 0  & 0   &  0  & 0 \\  \cline{1-7}
\begin{math} \{s_1, s_2 \} \end{math}    &  12  & 8  & 2  &  0  & 0  & 0 \\  \cline{1-7}
\begin{math} \{ s_2, s_3 \} \end{math}    &  20  & 8  & 0   & 2   & 0  &  0 \\  \cline{1-7}
\begin{math} \{s_1, s_3 \} \end{math}    & 30   & 4  & 0   &  0 &  2  &  0    \\  \cline{1-7}
\begin{math} S_{H_3} \end{math}    &  1   & 1  & 1  & 1 & 1   &  1  \\  \cline{1-7}
\hline
\end{tabular}
\end{center}}

\noindent Let \begin{math}\mathsf{A} = \left( \begin{array}{cccccc}
120 & 0 & 0 & 0 & 0 & 0 \\
60 & 4 & 0 & 0 & 0 & 0 \\
12 & 8 & 2 & 0 & 0 & 0 \\
20 & 8 & 0 & 2 & 0 & 0 \\
30 & 4 & 0 & 0 & 2 & 0 \\
1 & 1 & 1 & 1 & 1 & 1 \end{array} \right)    \end{math}. We know from Proposition \ref{multipli} that
\begin{math}\mathsf{A}^{-1} \left( \begin{array}{c} 120 \\ 120 \\ 120 \\ 120 \\ 120 \\ 120  \end{array} \right) \end{math} gives the cardinalities of the
conjugacy classes of \begin{math}H_3\end{math}. However, we get 
\begin{displaymath}\mathsf{A}^{-1} \left( \begin{array}{c} 120 \\ 120 \\ 120 \\ 120 \\ 120 \\ 120  \end{array} \right) =
\left( \begin{array}{c} 1 \\ 15 \\ -6 \\ -10 \\ 15 \\ 105  \end{array} \right)  \end{displaymath} which is absurd.

\noindent But if we use Theorem \ref{new}, the values of \begin{math}a_{JKK}\end{math} calculated for \begin{math}H_3\end{math} are:

{\renewcommand{\arraystretch}{1.5}
\renewcommand{\tabcolsep}{0.1cm}
\begin{center}
\begin{tabular}{|c|c|c|c|c|c|c|}
\hline
   & \begin{math} \varnothing \end{math}   & \begin{math} \{s_1\} \end{math}   & \begin{math} \{s_1, s_2 \} \end{math}   & \begin{math}\{ s_2, s_3 \} \end{math}  & \begin{math}\{s_1,  s_3 \}  \end{math}  &   \begin{math} S_{H_3} \end{math}   \\  \cline{1-7}
\begin{math} \varnothing \end{math}    &  120  &  0 & 0  & 0 & 0 & 0    \\  \cline{1-7}
\begin{math} \{s_1\} \end{math}   &  60  &  4   & 0  & 0   &  0  & 0 \\  \cline{1-7}
\begin{math} \{s_1, s_2 \} \end{math}    &  12  & 4  & 2  &  0  & 0  & 0 \\  \cline{1-7}
\begin{math} \{ s_2, s_3 \} \end{math}    &  20  & 4  & 0   & 2   & 0  &  0 \\  \cline{1-7}
\begin{math} \{s_1, s_3 \} \end{math}    & 30   & 4  & 0   &  0 &  2  &  0    \\  \cline{1-7}
\begin{math} S_{H_3} \end{math}    &  1   & 1  & 1  & 1 & 1   &  1  \\  \cline{1-7}
\hline
\end{tabular}
\end{center}}

\noindent Let \begin{math}\mathsf{B} = \left( \begin{array}{cccccc}
120 & 0 & 0 & 0 & 0 & 0 \\
60 & 4 & 0 & 0 & 0 & 0 \\
12 & 4 & 2 & 0 & 0 & 0 \\
20 & 4 & 0 & 2 & 0 & 0 \\
30 & 4 & 0 & 0 & 2 & 0 \\
1 & 1 & 1 & 1 & 1 & 1 \end{array} \right)    \end{math}. Then we get the following cardinalities of the conjugacy classes of \begin{math}H_3\end{math}
\begin{displaymath}\mathsf{B}^{-1} \left( \begin{array}{c} 120 \\ 120 \\ 120 \\ 120 \\ 120 \\ 120  \end{array} \right) = 
 \left( \begin{array}{c} 1 \\ 15 \\ 24 \\ 20 \\ 15 \\ 45  \end{array} \right)  \end{displaymath}
which are the correct values.

\small{
\bibliographystyle{abbrvnat}
}

\end{document}